\theoremstyle{plain}
\newtheorem{thm}{\protect\theoremname}
\theoremstyle{definition}
\newtheorem{defn}[thm]{\protect\definitionname}
\theoremstyle{plain}
\newtheorem{prop}[thm]{\protect\propositionname}
\theoremstyle{plain}
\newtheorem{lem}[thm]{\protect\lemmaname}
\newenvironment{proof}[1][\protect\proofname]{\par
\normalfont\topsep6\p@\@plus6\p@\relax
\trivlist
\itemindent\parindent
\item[\hskip\labelsep\scshape #1]\ignorespaces
}{%
\endtrivlist\@endpefalse
}
\providecommand{\proofname}{Proof}
\title{PROPER SCORING AND SUFFICIENCY}
\name{Peter Harremo{\"e}s}
\address{Niels Brock, Copenhagen Business College,   \\
   Copenhagen, DENMARK, harremoes@ieee.org }
\providecommand{\definitionname}{Definition}
\providecommand{\lemmaname}{Lemma}
\providecommand{\propositionname}{Proposition}
\providecommand{\theoremname}{Theorem}
\begin{document}
\maketitle 
\begin{abstract}
Logarithmic score and information divergence appear in both information
theory, statistics, statistical mechanics, and portfolio theory. We
demonstrate that all these topics involve some kind of optimization
that leads directly to the use of Bregman divergences. If a sufficiency
condition is also fulfilled the Bregman divergence must be proportional
to information divergence. The sufficiency condition has quite different
consequences in the different areas of application, and often it is
not fulfilled. Therefore the sufficiency condition can be used to
explain when results from one area can be transferred directly from
one area to another and when one will experience differences.
\end{abstract}

\section{INTRODUCTION}

The use of scoring rules has a long history in statistics. An early
contribution was the idea of minimizing the sum of square deviations
that dates back to Gauss and works perfectly for Gaussian distributions.
In the 1920's Ramsay and de Finetti proved versions of the Dutch book
theorem where determination of probability distributions were considered
as dual problems to maximizing a payoff function. Later it was proved
that any consistent inference corresponds to optimizing with respect
to some payoff function. A more systematic study of scoring rules
was given by McCarthy \cite{McCarthy1956} and has recently been studied
by Dawid, Lauritzen and Parry \cite{Dawid2012} where the notion of
a local scoring rule has been extended. The basic result is that the
only strictly local proper scoring rule is logarithmic score.

Thermodynamics is the study of concepts like heat, temperature and
energy. A major objective is to extract as much energy from a system
as possible. Concepts like entropy and free energy play a significant
role. The idea in statistical mechanics is to view the macroscopic
behavior of a thermodynamic system as a statistical consequence of
the interaction between a lot of microscopic components where the
interacting between the components are governed by very simple laws.
Here the central limit theorem and large deviation theory play a major
role. One of the main achievements is the formula for entropy as a
logarithm of a probability.

One of the main purposes of information theory is to compress data
so that data can be recovered exactly or approximately. One of the
most important quantities was called entropy because it is calculated
according to a formula that mimics the calculation of entropy in statistical
mechanics. Another key concept in information theory is information
divergence (KL-divergence) that was introduced by Kullback and Leibler
in 1951 in a paper entitled information and sufficiency. The link
from information theory back to statistical physics was developed
by E.T. Jaynes via the maximum entropy principle. The link back to
statistics is now well established \cite{Liese1987,Barron1998,Csiszar2004}. 

The relation between information theory and gambling was established
by Kelly\cite{Kelly1956}. Logarithmic terms appear because we are
interested in the exponent in an exponential growth rate of of our
wealth. Later Kelly's approach has been generalized to training of
stocks although the relation to information theory is weaker \cite{Cover1991}.

Related quantities appear in statistics, statistical mechanics, information
theory and finance, annd we are interested in a theory that describes
when these relations are exact and when they just work by analogy.
First we introduce some general results about optimization on convex
sets. This part applies exactly to all the topics under consideration
and lead to Bregman divergences. Secondly, we introduce a notion of
sufficiency and show that this leads to information divergence and
logarithmic score. This second step is not always applicable which
explains when the different topics are really different.

Proofs of the theorems in this short paper can be found in an appendix
that is part of the arXiv version of the paper.

\section{STATE SPACE}

The present notion of a state space is based on \cite{Holevo1982},
and is mainly relevant for quantum systems. 

Before we do anything we prepare our system. Let $\mathcal{P}$ denote
the set of preparations. Let $p_{0}$ and $p_{1}$ denote two preparations.
For $t\in\left[0,1\right]$ we define $\left(1-t\right)\cdot p_{0}+t\cdot p_{1}$
as the preparation obtained by preparing $p_{0}$ with probability
$1-t$ and $t$ with probability t. A measurement $m$ is defined
as an affine mapping of the set of preparations into a set of probability
measures on some measurable space. Let $\mathcal{M}$ denote a set
of feasible measurements. The state space $\mathcal{S}$ is defined
as the set of preparations modulo measurements. Thus, if $p_{1}$
and $p_{2}$ are preparations then they represent the same state if
$m\left(p_{1}\right)=m\left(p_{2}\right)$ for any $m\in\mathcal{M}.$

In statistics the state space equals the set of preparations and has
the shape of a simplex. The symmetry group of a simplex is simply
the group of permutations of the extreme points. In quantum theory
the state space has the shape of the density matrices on a complex
Hilbert space and the state space has a lot of symmetries that a simplex
does not have. For simplicity we will assume that the state space
is a finite dimensional convex compact space.

\section{OPTIMIZATION}

Let $\mathcal{A}$ denote a subset of the feasible measurements $\mathcal{M}$
such that $a\in A$ maps $S$ into a distribution on the real numbers
i.e. a random variable. The elements of $\mathcal{A}$ may represent
actions like the score of a statistical decision, the energy extracted
by a certain interaction with the system, (minus) the length of a
codeword of the next encoded input letter using a specific code book,
or the revenue of using a certain portfolio. For each $s\in\mathcal{S}$
we define $F\left(s\right)=\sup_{a\in\mathcal{A}}E\left[a\left(s\right)\right]$.
We note that $F$ is convex but $F$ need not be strictly convex.
We say that a sequence of actions $\left(a_{n}\right)_{n}$ is \emph{asymptotically
optimal} for the state $s$ if $E\left[a_{n}\left(s\right)\right]\to F\left(s\right)$
for $n\to\infty.$

If the state is $s_{1}$ but one acts as if the state were $s_{2}$
one suffers a \emph{regret} that equals the difference between what
one achieves and what could have been achieved. 
\begin{defn}
If $F\left(s_{1}\right)$ is finite \emph{the regret} is defined by
\begin{equation}
D_{F}\left(s_{1},s_{2}\right)=F\left(s_{1}\right)-\sup_{\left(a_{n}\right)_{n}}\limsup_{n\to\infty}E\left[a_{n}\left(s_{1}\right)\right]
\end{equation}
where the supremum is taken over all sequences $\left(a_{n}\right)_{n}$
that are asymptotically optimal over $s_{2}.$ \end{defn}
\begin{prop}
The regret $D_{F}$ has the following properties:\end{prop}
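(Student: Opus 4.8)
I would first linearise the problem. Because a measurement is an affine map from preparations to probability measures and expectation is linear, each action gives an \emph{affine} function $\ell_{a}(s):=E[a(s)]$ on the finite-dimensional compact convex state space $\mathcal{S}$. Hence $F=\sup_{a}\ell_{a}$ is convex and lower semicontinuous, every $\ell_{a}\le F$ pointwise, and a sequence $(a_{n})$ is asymptotically optimal over $s_{2}$ exactly when $\ell_{a_{n}}(s_{2})\to F(s_{2})$. The whole proof then becomes a statement about affine minorants of $F$ and their gradients, and the properties I aim to establish are non-negativity, vanishing on the diagonal, the Bregman-divergence representation, and the characterisation of the equality case.

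Non-negativity and the diagonal are immediate from this reduction. For any admissible sequence $\ell_{a_{n}}(s_{1})\le F(s_{1})$, so $\limsup_{n}E[a_{n}(s_{1})]\le F(s_{1})$ and, taking the supremum, $D_{F}(s_{1},s_{2})\ge 0$. Putting $s_{1}=s_{2}$, every asymptotically optimal sequence satisfies $\limsup_{n}\ell_{a_{n}}(s_{2})=F(s_{2})$, so the subtracted supremum equals $F(s_{2})$ and $D_{F}(s_{2},s_{2})=0$.

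The heart of the matter is to identify the subtracted supremum with the upper envelope of the supporting hyperplanes of $F$ at $s_{2}$:
\[
\sup_{(a_{n})}\,\limsup_{n\to\infty}E[a_{n}(s_{1})]=F(s_{2})+F'(s_{2};s_{1}-s_{2}),
\]
where $F'(s_{2};\cdot)$ is the one-sided directional derivative and the supremum runs over sequences asymptotically optimal over $s_{2}$. This exhibits the regret as a Bregman divergence, $D_{F}(s_{1},s_{2})=F(s_{1})-F(s_{2})-F'(s_{2};s_{1}-s_{2})$, which is precisely the link to the rest of the paper. For ``$\le$'' I would write $\ell_{a_{n}}(s_{1})=\ell_{a_{n}}(s_{2})+\langle g_{n},s_{1}-s_{2}\rangle$ with $g_{n}$ the linear part of $\ell_{a_{n}}$, compare $\ell_{a_{n}}$ with $F$ at the points $s_{2}+t(s_{1}-s_{2})$, take $\limsup_{n}$ for fixed $t$, and then let $t\downarrow 0$ so that the difference quotient of $F$ decreases to $F'(s_{2};s_{1}-s_{2})$. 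For ``$\ge$'' I would use that $\partial F(s_{2})$ is the closed convex hull of the gradients of the asymptotically active affine pieces (valid since $F=\sup_{a}\ell_{a}$ and $\mathcal{S}$ is compact and finite-dimensional); a linear functional has the same supremum over a set and over its closed convex hull, so there is an asymptotically optimal sequence whose gradients realise $F'(s_{2};s_{1}-s_{2})$ in the direction $s_{1}-s_{2}$, and that sequence attains the envelope at $s_{1}$.

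The hard part will be this representation, especially ``$\ge$'', because the actions are handed to us and we may only select among them: one must build a single sequence that is at once asymptotically optimal at $s_{2}$ and, in the limit, picks out the extreme subgradient in the direction of $s_{1}$. The technical engine is the description of the subdifferential as the closed convex hull of active gradients, whose hypotheses are exactly the compactness and finite-dimensionality assumed for $\mathcal{S}$; I would also watch the order of the limits in $n$ and in $t$ when proving ``$\le$''. The equality case then falls out: $D_{F}(s_{1},s_{2})=0$ precisely when the supporting hyperplane of $F$ at $s_{2}$ pointing towards $s_{1}$ also supports $F$ at $s_{1}$, equivalently when $s_{1}$ and $s_{2}$ admit a common asymptotically optimal sequence.
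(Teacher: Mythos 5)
You have proved only the first of the Proposition's three itemized properties. The Proposition claims (i) $D_F(s_1,s_2)\ge 0$ with equality if $s_1=s_2$; (ii) the mixture inequality $\sum t_i\, D_F(s_i,\tilde s) \ge \sum t_i\, D_F(s_i,\hat s) + D_F(\hat s,\tilde s)$ where $\hat s=\sum t_i\, s_i$; and (iii) that $\tilde s \mapsto \sum t_i\, D_F(s_i,\tilde s)$ is minimal at $\tilde s=\hat s$. Your argument for (i) is correct and is the natural one. But (ii) and (iii) never appear in your proposal; in their place you set out to prove a directional-derivative (Bregman) representation of $D_F$ and an equality-case characterisation, neither of which is what the Proposition asserts. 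In the paper the Bregman form is a separate remark placed after the Proposition, and it is made under the additional hypothesis that every state has a unique optimal action (which is what yields differentiability of $F$); it is not available for free. The omission matters downstream: property (ii) is exactly what is invoked later in the proof of Theorem \ref{thm:Smooth}, in the form $\sum_x P(x)\, D_F(\delta_x,Q) \ge \sum_x P(x)\, D_F(\delta_x,P) + D_F(P,Q)$, and (iii) follows from (ii) together with (i).

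Moreover, (ii) is not a formal consequence of what you do prove, so this is a genuine gap rather than a packaging issue. Writing $G(s_1,s_2)$ for the subtracted supremum in the definition of the regret, (ii) rearranges to $\sum t_i\, G(s_i,\hat s) + G(\hat s,\tilde s) \ge F(\hat s) + \sum t_i\, G(s_i,\tilde s)$; the obstruction is that the suprema $G(s_i,\tilde s)$ for different $i$ are taken over \emph{different} asymptotically optimal sequences, and affinity of $s\mapsto E\left[a\left(s\right)\right]$ only yields $\sum t_i\, G(s_i,\tilde s) \ge G(\hat s,\tilde s)$ (a sum of limsups dominates the limsup of the sum), which points the wrong way. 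The difficulty is real: for $F(x)=\left|x\right|$ on the state space $\left[-1,1\right]$, take $\tilde s=0$, $s_1=1$, $s_2=-1$ with weights $3/4$ and $1/4$, so $\hat s=1/2$. At the kink both slopes $\pm 1$ are optimal, giving $D_F(1,0)=D_F(-1,0)=0$ and $D_F(1/2,0)=0$, while the optimal slope at $1/2$ is unique, forcing $D_F(-1,1/2)=2$ and $D_F(1,1/2)=0$; the left side of (ii) is then $0$ and the right side is $1/2$. So (ii) genuinely requires the unique-optimal-action (differentiable) setting, where $D_F$ is an honest Bregman divergence and (ii) holds with equality by the standard compensation identity; your proof needs either to add that hypothesis and reduce to that computation, or to confront the non-smooth case, where the claim read literally with the asymptotic definition is problematic. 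Finally, the ``$\ge$'' half of your representation (that $\partial F(s_2)$ is the closed convex hull of limits of gradients of asymptotically optimal actions) is asserted rather than proved, and needs care at boundary points of $\mathcal{S}$ where those gradients can be unbounded.
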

\begin{itemize}
\item $D_{F}\left(s_{1},s_{2}\right)\geq0$ with equality if $s_{1}=s_{2}.$
\item $\sum t_{i}\cdot D_{F}\left(s_{i},\tilde{s}\right)\geq\sum t_{i}\cdot D_{F}\left(s_{i},\hat{s}\right)+D_{F}\left(\hat{s},\tilde{s}\right)$
where $\left(t_{1},t_{2},\dots,t_{\ell}\right)$ is a probability
vector and $\hat{s}=\sum t_{i}\cdot s_{i}$.
\item $\sum t_{i}\cdot D_{F}\left(s_{i},\tilde{s}\right)$ is minimal when
$\hat{s}=\sum t_{i}\cdot s_{i}$.
\end{itemize}
If the state space is finite dimensional and there exists a unique
action $a_{2}$ such that $F\left(s_{2}\right)=E\left[a\left(s_{2}\right)\right]$
then $D_{F}\left(s_{1},s_{2}\right)=E\left[a_{1}\left(s_{1}\right)\right]-E\left[a_{2}\left(s_{1}\right)\right]$.
If unique optimal actions exists for any state then $F$ is differentiable
which implies that the regret can be written as a \emph{Bregman divergence}
in the following form
\begin{align}
D_{F}\left(s_{1},s_{2}\right) & =F\left(s_{1}\right)-\left(F\left(s_{2}\right)+\left\langle s_{1}-s_{2},\nabla F\left(s_{2}\right)\right\rangle \right).
\end{align}
In the context of forecasting and statistical scoring rules the use
of Bregman divergences dates back to \cite{Hendrickson1971}.

We note that $D_{F_{1}}\left(s_{1},s_{2}\right)=D_{F_{2}}\left(s_{1},s_{2}\right)$
if and only if $F_{1}\left(s\right)-F_{2}\left(s\right)$ is an affine
function of $s.$ If the state $s_{2}$ has the unique optimal action
$a_{2}$ then 
\begin{equation}
F\left(s_{1}\right)=D_{F}\left(s_{1},s_{2}\right)+E\left[a_{2}\left(s_{1}\right)\right]
\end{equation}
 so the function $F$ can be reconstructed from $D_{F}$ except for
an affine function of $s_{1}.$ The closure of the convex hull of
the set of functions $s\to E\left[a\left(s\right)\right]$ is uniquely
determined by the convex function $F.$

\section{SUFFICIENCY}

Let $\left(s_{\theta}\right)_{\theta}$ denote a family of states
and let $\Phi$ denote \emph{a completely positive} transformation
$\mathcal{S}\to\mathcal{T}$ where $\mathcal{S}$ and $\mathcal{T}$
denote state spaces. Then $\Phi$ is said to be \emph{sufficient}
for $\left(s_{\theta}\right)_{\theta}$ if there exists a completely
positive transformation $\Psi:\mathcal{T}\to\mathcal{S}$ such that
$\Psi\left(\Phi\left(s_{\theta}\right)\right)=s_{\theta}.$ 

We say that the regret $D_{F}$ on the state space $S$ satisfies
the \emph{sufficiency property} if $D_{F}\left(\Phi\left(s_{1}\right),\Phi\left(s_{2}\right)\right)=D_{F}\left(s_{1},s_{2}\right)$
for any completely positive transformation $\mathcal{S}\to\mathcal{S}$
that is sufficient for $\left(s_{1},s_{2}\right).$ The notion of
sufficiency as a property of divergences was introduced in \cite{Harremoes2007a}.
The crucial idea of restricting the attention to transformations of
the state space into itself was introduced in \cite{Jiao2014}.
\begin{thm}
Assume that $S$ is a state space. If the divergence $D_{F}$ satisfies
the sufficiency property then for any state $s$ and any completely
positive transformation $\Phi:S\to S$ one has $F\left(\Phi\left(s\right)\right)=F\left(s\right)$
.
\end{thm}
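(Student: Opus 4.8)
The plan is to convert the hypothesis, which is phrased in terms of the divergence $D_F$, into a statement about the potential $F$ itself, and then to remove the affine indeterminacy that the excerpt has already isolated. The bridge is the identity $D_F(\Phi(s_1),\Phi(s_2))=D_{F\circ\Phi}(s_1,s_2)$, valid for any completely positive (hence affine) $\Phi$. I would verify it by the chain rule: writing $\Phi=\Phi_0+c$ with $\Phi_0$ the linear part, one has $\Phi(s_1)-\Phi(s_2)=\Phi_0(s_1-s_2)$ and $\nabla(F\circ\Phi)(s_2)=\Phi_0^{*}\nabla F(\Phi(s_2))$, so the two Bregman expressions agree term by term. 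This is the routine computation and I would not dwell on it.

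Next I would feed in the sufficiency property. For a $\Phi$ that admits a completely positive recovery $\Psi$ with $\Psi\circ\Phi=\mathrm{id}$ — equivalently a $\Phi$ that is sufficient for every pair $(s_1,s_2)$ — the property gives $D_F(\Phi(s_1),\Phi(s_2))=D_F(s_1,s_2)$ for all $s_1,s_2$, which by the identity above reads $D_{F\circ\Phi}=D_F$. The excerpt's remark that two potentials share the same regret exactly when they differ by an affine function then yields that $\ell:=F\circ\Phi-F$ is affine. At this point the entire force of the theorem has been compressed into the single claim that $\ell\equiv 0$.

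Eliminating $\ell$ is the step I expect to be the real obstacle, because $D_F$ is by construction blind to affine shifts of $F$, so nothing seen so far excludes a nonzero $\ell$; the conclusion must therefore exploit the global geometry of $S$. Here I would use that $S$ is convex and compact, so $F$ is bounded on it, and iterate: $F\circ\Phi^{n}-F=\sum_{k=0}^{n-1}\ell\circ\Phi^{k}$. Boundedness of the left-hand side forces the Cesàro averages of $\ell$ along every orbit to stay bounded, and pairing this with an invariant probability measure $\mu$ (which exists by a Markov--Kakutani fixed-point argument on the compact convex $S$) gives $\int\ell\,d\mu=0$. The genuinely delicate point — and the place where I would concentrate the work — is upgrading $\int\ell\,d\mu=0$ to the pointwise statement $\ell\equiv 0$, rather than a mere vanishing of some average; I expect this to require the full strength of the state-space hypotheses (the precise normalization of the actions generating $F$ and the compact geometry of $S$) rather than the formal divergence identities alone, and it is exactly here that the passage from "$D_F$ invariant" to "$F$ invariant" must be made rigorous.
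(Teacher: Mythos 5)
Your opening reduction is the right first move, and it is correct as far as it goes: for an affine $\Phi$ admitting a global completely positive recovery $\Psi\circ\Phi=\mathrm{id}$, the sufficiency property applies to every pair, the identity $D_{F}\left(\Phi\left(s_{1}\right),\Phi\left(s_{2}\right)\right)=D_{F\circ\Phi}\left(s_{1},s_{2}\right)$ holds, and hence $\ell:=F\circ\Phi-F$ is affine. Note, though, that this silently replaces ``any completely positive transformation'' by ``any recoverable one,'' and that this restriction is forced rather than optional: for a non-recoverable $\Phi$ the sufficiency property constrains only the pairs for which $\Phi$ is sufficient, so the global identity $D_{F\circ\Phi}=D_{F}$ is unavailable, and for the constant map $\Phi\equiv t_{0}$ (which is completely positive) the literal conclusion would force $F$ to be constant. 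Be aware also that the paper offers no proof of this theorem to compare against: the appendix proves only Theorem \ref{thm:Smooth}, Lemma \ref{lem:SuffImplLocal} and Theorem \ref{thm:portfolio}, and the nearest relative in the text is the thermodynamic argument of the statistical-mechanics subsection, which derives $F\left(\Phi\left(s\right)\right)=F\left(s\right)$ from a different hypothesis altogether, namely that the action class $\mathcal{A}$ is closed under composition with completely positive maps, so that $a\circ\Phi$ and $a\circ\Psi$ are again feasible actions.

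The gap you flag in your final paragraph is therefore not merely the hard part; it cannot be closed, because the statement you are trying to reach does not follow from the stated hypotheses. The hypothesis is a property of $D_{F}$ alone, and $D_{F}$ is unchanged when $F$ is replaced by $F$ plus an affine function, while the conclusion $F\circ\Phi=F$ is not invariant under this replacement. Concretely, on the binary simplex take $F\left(p\right)=p\log p+\left(1-p\right)\log\left(1-p\right)+cp$ with $c\neq0$: then $D_{F}$ is exactly information divergence, which satisfies the sufficiency property, yet for the swap $\sigma\left(p\right)=1-p$ --- a completely positive map with completely positive inverse, hence sufficient for every pair --- one has $F\left(\sigma\left(p\right)\right)-F\left(p\right)=c\left(1-2p\right)\not\equiv0$. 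So ``$\ell$ is affine'' is the strongest conclusion any argument can extract, yours included; your ergodic scheme in particular can only ever give $\ell=0$ at fixed points of $\Phi$, since by affinity $\frac{1}{n}\sum_{k=0}^{n-1}\ell\left(\Phi^{k}\left(s\right)\right)=\ell\left(\frac{1}{n}\sum_{k=0}^{n-1}\Phi^{k}\left(s\right)\right)$ and the Ces\`aro averages converge to fixed points. The theorem is true only modulo affine functions --- equivalently, after replacing $F$ by a normalized representative of its affine class, e.g.\ the average of $F\circ\Phi$ over the compact group of reversible transformations --- or under the extra action-closure hypothesis used in the thermodynamic argument. A correct write-up should either prove that corrected statement or exhibit the counterexample; it cannot deliver the literal claim.
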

If the alphabet size is two the above condition on $F$ is sufficient
to conclude that 
\begin{equation}
D_{F}\left(\Phi\left(s_{1}\right),\Phi\left(s_{2}\right)\right)=D_{F}\left(s_{1},s_{2}\right).
\end{equation}

\begin{thm}
\label{thm:Generelt}Assume that the state space $S$ is a classical
or quantum state space on three or more letters. If the regret $D_{F}$
satisfies the sufficiency property, then $F$ is proportional to the
entropy function and $D_{F}$ is proportional to information divergence
(relative entropy).
\end{thm}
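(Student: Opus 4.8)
The plan is to reduce the quantum statement to the classical one and then to pin down the classical $F$ by combining the Bregman structure of $D_F$ with the data--processing behaviour it inherits from sufficiency. By the previous theorem the sufficiency property forces $F(\Phi(s))=F(s)$ for the relevant transformations, in particular for the reversible ones. On a quantum state space of dimension $d\ge 3$ the reversible completely positive maps are exactly the unitary conjugations $\rho\mapsto U\rho U^{*}$, so $F(\rho)$ can depend only on the spectrum of $\rho$; write $F(\rho)=\phi(\lambda_1,\dots,\lambda_d)$ with $\phi$ a symmetric function on $\Delta_{d-1}$. Restricting the sufficiency property to families of mutually commuting states---which in a common eigenbasis are ordinary classical distributions, their reversibility being governed by the Petz recovery map---shows that $\phi$ satisfies the classical sufficiency property on $d\ge 3$ letters. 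Hence it suffices to settle the classical case: once $\phi$ is proportional to Shannon entropy we obtain $F(\rho)\propto -\mathrm{Tr}(\rho\log\rho)$ up to an affine term, i.e. $F$ proportional to von Neumann entropy and $D_F$ proportional to Umegaki relative entropy.

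For the classical simplex on $n\ge 3$ letters I would first note that $D_F$ obeys the data--processing inequality: the composition $a\circ\Phi$ of a feasible action with a channel is again a feasible action, so $F(\Phi(s))\le F(s)$, and the sufficiency property is precisely the assertion that equality holds whenever $\Phi$ is sufficient. A divergence with this monotonicity behaves, under the mild regularity available here, like an $f$-divergence $D_F(p,q)=\sum_i q_i f(p_i/q_i)$. The feature that makes $n\ge 3$ essential is the existence of genuinely non-reversible sufficient self-maps: the coarse--graining that merges two coordinates across which two states $s_1,s_2$ happen to share the same conditional split $(\gamma,1-\gamma)$ is sufficient for that pair, with recovery given by the fixed re-splitting in proportion $(\gamma,1-\gamma)$. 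Such maps have no analogue for a binary alphabet---which is exactly why the binary case admits many solutions---and they let one compare $D_F$ on $\Delta_{n-1}$ with its restriction to a face $\Delta_{n-2}$.

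It remains to combine the two structural facts. The divergence $D_F$ is simultaneously a Bregman divergence, by the Proposition, and an $f$-divergence, by the previous paragraph; for alphabets of size at least three the only divergences of both kinds are the scalar multiples of information divergence---this is the rigidity underlying the binary--versus--larger dichotomy emphasised in \cite{Jiao2014}. Equivalently, feeding the merging maps into the reconstruction $F(s_1)=D_F(s_1,s_2)+E[a_2(s_1)]$ of the Proposition yields, after fixing the affine gauge, the recursivity relation $F(p_1,\dots,p_n)=F(p_1+p_2,p_3,\dots,p_n)+(p_1+p_2)\,h\!\left(\frac{p_1}{p_1+p_2}\right)$, whose symmetric continuous solutions are multiples of Shannon entropy by the Faddeev characterisation. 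Either route gives $F\propto H$ up to an affine function; since $D_F$ is unaffected by affine changes of $F$ and the Bregman divergence generated by the (negative) entropy is information divergence, $D_F$ is proportional to information divergence.

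The main obstacle is the middle step, upgrading the sufficiency property to the $f$-divergence (or recursive) form. One must verify that enough sufficient transformations exist as maps of the state space \emph{into itself}---the restriction introduced in \cite{Jiao2014}---to determine the functional equation, and check the regularity needed for the monotone-plus-Bregman classification; permutation and unitary invariance by themselves only deliver symmetry and spectrality and are far too weak. In the quantum case the matching difficulty is to control the recovery maps for non-commuting families, so that the reduction to commuting sub-families captures all the constraints rather than merely the classical shadow.
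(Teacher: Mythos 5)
Your outer reduction of the quantum case to the classical case is essentially the paper's own proof of Theorem \ref{thm:Generelt}: restrict to commutative subalgebras, note that every state generates one, apply the classical result there, and conclude that $F$ is proportional to entropy and $D_F$ to relative entropy on all states. The real content is therefore the classical statement on $\ell\geq3$ letters (the paper's Theorem \ref{thm:JiaoVenkat}), and that is where your proposal has a genuine gap --- one you flag yourself as ``the main obstacle'' but do not close.

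Your route (a) fails twice. The inference from $F\left(\Phi\left(s\right)\right)\leq F\left(s\right)$ to ``$D_F$ obeys the data-processing inequality'' is a non sequitur: monotonicity of the potential under channels says nothing about monotonicity of the regret, and Bregman divergences generically violate DPI. For instance the squared Euclidean distance on the simplex (generated by $F\left(x\right)=\left\Vert x\right\Vert ^{2}$) increases under the stochastic self-map that merges two letters: $p=\left(\tfrac{1}{2},\tfrac{1}{2},0\right)$ and $q=\left(0,0,1\right)$ have $\left\Vert p-q\right\Vert ^{2}=\tfrac{3}{2}$, while their images $\left(1,0,0\right)$ and $\left(0,0,1\right)$ have squared distance $2$. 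Sufficiency gives equality only under sufficient maps; DPI under \emph{all} maps is essentially as strong as the conclusion you are after, so it cannot be assumed. Moreover, even granting DPI, ``monotone divergence $\Rightarrow$ $f$-divergence'' is not a theorem (R\'enyi divergences are monotone and are not $f$-divergences). Your route (b) names the right kind of functional equation, but the recursivity relation is asserted rather than derived, and extracting it from sufficiency is exactly the technical work that is missing. The paper does this work in Lemma \ref{lem:SuffImplLocal}, with a weaker intermediate target than yours: using invariance under permutations (which are sufficient) and the sufficient map that projects $Q$ onto $\left(q_{i},\frac{1-q_{i}}{\ell-1},\dots,\frac{1-q_{i}}{\ell-1}\right)$ --- a mixture of permutations fixing the first letter, whose recovery kernel re-splits the averaged mass proportionally to $Q$ --- it shows that $D_F\left(\delta_{i},Q\right)$ depends only on $q_{i}$, i.e.\ the induced proper scoring rule is strictly local, equivalently the divergence is separable. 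Separability (not the $f$-divergence form) then suffices: by \cite{Harremoes2007a}, on alphabets of size at least three the only separable Bregman divergences are the multiples of information divergence. If you replace your middle step by this locality lemma, your outline collapses onto the paper's proof; as written, the middle step is either false (route (a)) or unproved (route (b)).
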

This theorem can be proved via a numer of partial results as explained
in the next section.

\section{APPLICATIONS}

\subsection{Statistics}

Consider an experiment with $\mathcal{X}=\left\{ 1,2,\dots,\ell\right\} $
as sample space. A \emph{scoring rule} $f$ is defined as a function
with domain $\mathcal{X}\times M_{1}^{+}\left(\mathcal{X}\right)\to R$
such that the score is $f\left(x,Q\right)$ when the prediction was
given by $Q$ and $x\in\mathcal{X}$ has been observed. A scoring
rule is \emph{proper} if for any probability measure $P\in M_{1}^{+}\left(\mathcal{X}\right)$
the score $\sum_{x\in\mathcal{X}}P\left(x\right)\cdot f\left(x,Q\right)$
is minimal when $Q=P.$
\begin{thm}
\label{thm:Smooth}The scoring rule $f$ is proper is and only if
there exists a smooth function $F$ such that $f\left(x,Q\right)=D_{F}\left(\delta_{x},Q\right)+\tilde{f}\left(x\right).$\end{thm}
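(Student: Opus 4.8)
The plan is to treat the two implications separately, with the reverse (``if'') direction following almost immediately from the Proposition and the forward (``only if'') direction being a Savage-type supporting-hyperplane argument. For the ``if'' direction I would assume $F$ is smooth and convex, so that $D_F$ is exactly the Bregman divergence of the Proposition, and simply compute the expected score. Writing $f(x,Q)=D_F(\delta_x,Q)+\tilde f(x)$ and taking expectation under $P$ gives $\sum_{x}P(x)f(x,Q)=\sum_{x}P(x)\,D_F(\delta_x,Q)+\sum_{x}P(x)\tilde f(x)$, where the second sum does not depend on $Q$. The first sum is precisely $\sum_i t_i\,D_F(s_i,\tilde s)$ with $t_i=P(i)$, $s_i=\delta_i$ and $\tilde s=Q$, so the third bullet of the Proposition says it is minimized exactly when $Q=\sum_x P(x)\delta_x=P$. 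Hence the expected score is minimal at $Q=P$ and $f$ is proper; this direction needs no new work beyond the Proposition.

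For the ``only if'' direction I would first build the convex potential. Since each map $P\mapsto\sum_x P(x)\bigl(-f(x,Q)\bigr)$ is affine in $P$, the function $F(P):=-\inf_{Q}\sum_x P(x)f(x,Q)=\sup_Q\sum_x P(x)\bigl(-f(x,Q)\bigr)$ is a supremum of affine functions and therefore convex; by properness the infimum is attained at $Q=P$, so $F(P)=-\sum_x P(x)f(x,P)$. The key observation is that properness says exactly that, for each fixed $Q$, the affine function $P\mapsto\sum_x P(x)\bigl(-f(x,Q)\bigr)$ lies below $F$ and touches it at $P=Q$; it is thus the supporting tangent of $F$ at $Q$. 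Assuming differentiability, this supporting affine function must equal $F(Q)+\langle P-Q,\nabla F(Q)\rangle$, and evaluating both sides at $P=\delta_x$ yields $-f(x,Q)=F(Q)+\langle\delta_x-Q,\nabla F(Q)\rangle$. Rearranging and comparing with the definition of $D_F$ gives $f(x,Q)=D_F(\delta_x,Q)-F(\delta_x)$, which is the claimed form with $\tilde f(x)=-F(\delta_x)=f(x,\delta_x)$.

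The step I expect to be the main obstacle is producing the \emph{smooth} $F$ promised in the statement: the supporting-hyperplane argument only delivers a subgradient of the convex function $F$ at each $Q$, so to write $\nabla F(Q)$ I need $F$ to be genuinely differentiable. Following the remark after the Proposition, I would argue that whenever the optimal prediction is unique for every $P$ (equivalently, whenever $f$ is regular enough) the potential $F$ is differentiable, and then invoke the smoothness of $f$ in $Q$ to upgrade this to smoothness of $F$. A pleasant technical simplification is that, because $\delta_x$ and $Q$ are both probability vectors, the difference $\delta_x-Q$ is orthogonal to the all-ones direction, so the inner product $\langle\delta_x-Q,\nabla F(Q)\rangle$ is insensitive to the ambiguity of $\nabla F$ along the normal of the simplex; only the restriction of $F$ to the affine hull of $M_1^+(\mathcal{X})$ matters, which removes the one spurious degree of freedom in the gradient.
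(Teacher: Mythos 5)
Your proposal is correct, and your ``if'' direction coincides with the paper's: both decompose the expected score and invoke the mixture inequality/minimality property of $D_F$ from the Proposition, so that the expected score is minimized at $Q=P$. In the ``only if'' direction you take a genuinely different route. The paper never constructs $F$ explicitly: it defines the divergence $D(P,Q)=\sum_{x}P(x)f(x,Q)-\sum_{x}P(x)f(x,P)$, observes that properness gives $D\geq 0$ with equality iff $P=Q$, checks that the mixture identity $\sum_i t_i D(P_i,Q)=\sum_i t_i D(P_i,\hat{P})+D(\hat{P},Q)$ holds with \emph{equality}, and from these two facts asserts that $D$ equals a Bregman divergence $D_F$; evaluating at $P=\delta_y$ then gives $\tilde{f}(x)=f(x,\delta_x)$, exactly the constant you obtain. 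You instead build the potential $F(P)=\sup_Q\sum_x P(x)\bigl(-f(x,Q)\bigr)$ as a supremum of affine maps and identify each affine score map as a supporting hyperplane of $F$ at $Q$, hence as the tangent wherever $F$ is differentiable --- the classical Savage-type argument. Your route buys explicitness: it exhibits $F$ and explains \emph{why} the divergence has Bregman form, which the paper compresses into the unproved sentence ``with these two results we see that $D$ equals a Bregman divergence $D_F$''; the paper's route stays inside the divergence-identity framework of its Proposition and never has to mention gradients. Both arguments share the weak point you candidly flag: properness alone delivers only subgradients, so the \emph{smoothness} of $F$ promised in the statement is not actually established by either argument; your proposed repair (uniqueness of the optimal prediction implies differentiability of $F$, per the remark following the Proposition) is the natural way to close a gap that the paper's own proof leaves open as well.
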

\begin{defn}
A \emph{strictly local scoring rule} is a scoring rule of the form
$f\left(x,Q\right)=g\left(Q\left(x\right)\right).$\end{defn}
\begin{lem}
\label{lem:SuffImplLocal}On a finite space a Bregman divergence that
satisfies the sufficiency condition gives a strictly local scoring
rule. 
\end{lem}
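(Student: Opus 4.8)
The plan is to show directly that the canonical proper scoring rule attached to $D_{F}$, namely $f(x,Q)=D_{F}(\delta_{x},Q)$ (Theorem \ref{thm:Smooth} with $\tilde{f}\equiv 0$), depends on the forecast $Q$ only through the single number $Q(x)$. In the classical finite setting the completely positive self-maps of $\mathcal{S}=M_{1}^{+}(\mathcal{X})$ are exactly the stochastic matrices, and such a map $\Phi$ is sufficient for a pair $(s_{1},s_{2})$ precisely when there is a stochastic matrix $\Psi$ with $\Psi\Phi s_{i}=s_{i}$ for $i=1,2$. The conceptual engine is the Fisher--Neyman picture: outcomes carrying the same likelihood ratio $s_{1}(x)/s_{2}(x)$ may be merged without destroying the ability to recover $s_{1}$ and $s_{2}$.

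First I would fix an outcome $x$ and a forecast $Q$ and exploit that, for the pair $(\delta_{x},Q)$, the likelihood ratio is constant (equal to $0$) on every outcome other than $x$, so all of those outcomes may be lumped together. Concretely, pick a reference outcome $y\neq x$ and let $\Phi$ be the stochastic matrix that fixes $x$ and sends every other coordinate to $y$, so that $\Phi(p)(x)=p(x)$, $\Phi(p)(y)=1-p(x)$, and $\Phi(p)(z)=0$ for $z\notin\{x,y\}$. Then $\Phi(\delta_{x})=\delta_{x}$ and $\Phi(Q)$ is the two-point distribution putting mass $Q(x)$ on $x$ and $1-Q(x)$ on $y$. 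To see that $\Phi$ is sufficient for $(\delta_{x},Q)$ I would exhibit the recovery map $\Psi$ that leaves $\delta_{x}$ fixed and redistributes the mass at $y$ according to the conditional law $z\mapsto Q(z)/(1-Q(x))$ on $\mathcal{X}\setminus\{x\}$; this $\Psi$ is a legitimate stochastic matrix (its relevant columns are genuine probability vectors) and satisfies $\Psi\Phi\delta_{x}=\delta_{x}$ and $\Psi\Phi Q=Q$. The sufficiency property of $D_{F}$ then gives $D_{F}(\delta_{x},Q)=D_{F}(\Phi\delta_{x},\Phi Q)$, whose right-hand side depends on $Q$ only through $Q(x)$. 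Thus $D_{F}(\delta_{x},Q)=h_{x}(Q(x))$ for some function $h_{x}$ that may a priori still depend on $x$.

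To remove this residual dependence on $x$ I would invoke permutation symmetry. Every permutation matrix $\pi$ is invertible with stochastic inverse, so $\pi$ is sufficient for any pair (take $\Psi=\pi^{-1}$); the sufficiency property therefore gives $D_{F}(\pi s_{1},\pi s_{2})=D_{F}(s_{1},s_{2})$ for all $\pi$. Choosing $\pi$ to be the transposition of $x$ with a fixed reference outcome yields $D_{F}(\delta_{x},Q)=D_{F}(\delta_{1},\pi Q)$ with $(\pi Q)(1)=Q(x)$, whence $h_{x}=h_{1}=:g$ for every $x$. Combining the two steps gives $f(x,Q)=D_{F}(\delta_{x},Q)=g(Q(x))$, which is exactly strict locality.

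The step I expect to be the main obstacle, and the one deserving the most care, is verifying that the lumping map $\Phi$ really is sufficient, i.e. constructing $\Psi$ and checking that it is completely positive while recovering both $\delta_{x}$ and $Q$ simultaneously from their common two-point images (the degenerate case $Q=\delta_{x}$ being handled separately and trivially). This is where the special structure of the pair $(\delta_{x},Q)$ is essential: because one argument is a point mass, the likelihood ratio collapses off $x$, which is precisely what permits the dimension-reducing sufficient self-map. For a generic pair of forecasts no such map need exist, so strict locality is genuinely a consequence of testing $D_{F}$ against point masses together with the sufficiency hypothesis, rather than a feature of arbitrary Bregman divergences.
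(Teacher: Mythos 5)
Your proof is correct and takes essentially the same route as the paper's: permutation invariance of $D_{F}$ under sufficient (invertible) relabelings, combined with one sufficient coarse\mbox{-}graining of the pair $\left(\delta_{x},Q\right)$ that destroys all information about $Q$ except $Q\left(x\right)$, recoverable because the likelihood ratio vanishes off $x$. The only difference is cosmetic: the paper collapses the off-$x$ coordinates to equal masses $\frac{1-q}{\ell-1}$ by averaging over permutations fixing the first coordinate, whereas you merge them onto a single reference outcome $y$; both rest on the same conditional-redistribution recovery map $\Psi$, which you (unlike the paper) construct explicitly.
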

The following theorem was given in \cite{Jiao2014} with a much longer
proof.
\begin{thm}
\label{thm:JiaoVenkat}On a finite alphabet with at least three letters
a Bregman divergence that satisfies the sufficiency condition is proportional
to information divergence. \end{thm}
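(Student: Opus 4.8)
The plan is to use the two preceding results to reduce the statement to a one-variable functional equation for the local score function and then to exploit the hypothesis $\ell\ge 3$ to solve it. First I would apply Lemma~\ref{lem:SuffImplLocal} to replace the sufficiency hypothesis by the concrete fact that the divergence comes from a \emph{strictly local} scoring rule, so that by definition there is a single smooth function $g$ on $(0,1]$ with $f(x,Q)=g(Q(x))$. By Theorem~\ref{thm:Smooth} this $f$ is proper, and evaluating the representation $f(x,Q)=D_F(\delta_x,Q)+\tilde f(x)$ at $Q=\delta_x$ together with $D_F(\delta_x,\delta_x)=0$ shows that $\tilde f(x)=g(1)$ does not depend on $x$. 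Thus everything is encoded in the scalar function $g$.

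Next I would extract a differential equation for $g$ from propriety. Propriety says that $Q\mapsto\sum_x P(x)\,g(Q(x))$ attains its minimum over the open simplex at $Q=P$ for every interior $P$, so the Lagrange condition for the constraint $\sum_x Q(x)=1$ gives $P(x)\,g'(P(x))=\lambda_P$ for each coordinate $x$, with $\lambda_P$ possibly depending on $P$. Setting $\varphi(t)=t\,g'(t)$, this means that $\varphi$ takes one and the same value on all coordinates of any interior probability vector. I expect this to be the crux of the proof and the place where the three-letter hypothesis is genuinely needed: when $\ell\ge 3$, any two values $u,v\in(0,1)$ with $u+v<1$ occur together as coordinates of some interior vector, since the remaining mass $1-u-v>0$ can be placed on the (at least one) leftover coordinate, and this forces $\varphi(u)=\varphi(v)$; a short chaining step through an auxiliary value $w<\min(1-u,1-v)$ then lifts the restriction $u+v<1$ and yields that $\varphi$ is constant on all of $(0,1)$. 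For $\ell=2$ the identical computation only gives $\varphi(u)=\varphi(1-u)$, which explains why the theorem fails without the extra letter.

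Constancy of $\varphi$ gives $t\,g'(t)=c$, hence
\begin{equation}
g(t)=c\log t+d,\qquad t\in(0,1],
\end{equation}
and the second-order (convexity) condition for a genuine minimum forces $c<0$. To finish, I would use that, by Theorem~\ref{thm:Smooth}, the regret of a proper scoring rule can be written as $D_F(P,Q)=\sum_x P(x)\,[\,f(x,Q)-f(x,P)\,]$ (the $\tilde f$ terms cancel), which for the logarithmic $g$ above collapses to $D_F(P,Q)=c\sum_x P(x)\log\frac{Q(x)}{P(x)}=-c\sum_x P(x)\log\frac{P(x)}{Q(x)}$. Since $-c>0$, this is exactly a positive multiple of information divergence, as claimed. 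This last step replaces any tangent-plane reconstruction of $F$ by a direct computation, and I do not expect difficulty there once $g$ has been pinned down; the only real work is the functional-equation argument of the previous paragraph.
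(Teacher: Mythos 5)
Your proposal is correct, and after the first step it takes a genuinely different route from the paper. Both you and the paper begin by invoking Lemma~\ref{lem:SuffImplLocal} to turn sufficiency into strict locality, $f(x,Q)=g(Q(x))$. At that point the paper essentially stops computing: it notes that a strictly local proper scoring rule corresponds to a \emph{separable} divergence and then cites \cite{Harremoes2007a} for the classification that, on alphabets with at least three letters, the only separable Bregman divergences are multiples of information divergence. You instead bypass separability and solve the problem from scratch: propriety at an interior $P$ gives the Lagrange condition $P(x)\,g'(P(x))=\lambda_P$, so $\varphi(t)=t\,g'(t)$ takes a common value on all coordinates of any interior probability vector; with $\ell\ge 3$ any two values $u,v\in(0,1)$ with $u+v<1$ occur as coordinates of one such vector, and your chaining step through a small auxiliary $w$ makes $\varphi$ constant, whence $g(t)=c\log t+d$, and the Bregman identity $D_F(P,Q)=\sum_x P(x)\left[g(Q(x))-g(P(x))\right]$ (the equality version of the second bullet of the Proposition, established in the proof of Theorem~\ref{thm:Smooth}) yields $\left|c\right|D\left(P\Vert Q\right)$. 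What your route buys is self-containedness and transparency: it exhibits exactly where the three-letter hypothesis is used, and shows why the binary case fails (there one only gets $\varphi(u)=\varphi(1-u)$), whereas the paper's proof is shorter on the page but outsources the core classification to the reference. Two points you should state explicitly to be airtight: the Lagrange argument needs $g\in C^{1}$, which is legitimate here because the paper's Bregman divergences come from a smooth $F$ (so $g(q)=D_F\left(\delta_1,\left(q,\tfrac{1-q}{\ell-1},\dots\right)\right)$ is differentiable), and the degenerate case $c=0$ (identically vanishing divergence, affine $F$) should be excluded, e.g.\ by requiring the scoring rule to be strictly proper or $F$ to be non-affine.
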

\begin{proof}
Since any strictly local proper scoring rule corresponds to separable
divergence a divergence that is Bregman and satisfies sufficiency
must also be separable. If the alphabet size is at least three the
only separable divergences that are Bregman divergences are the ones
proportional to information divergence \cite{Harremoes2007a}. 
\end{proof}

\subsection{Information theory}

Let $b_{1},b_{2},\dots,b_{n}$ denote the letters of an alphabet and
let $\ell\left(\kappa\left(b_{i}\right)\right)$ denote the length
of the codeword $\kappa\left(b_{i}\right)$ according to some code
book $\kappa.$ If the code is uniquely decodable then $\sum2^{-\ell\left(\kappa\left(b_{i}\right)\right)}\leq1.$
Note that $\ell\left(\kappa\left(b_{i}\right)\right)$ is an integer.
If only integer values of $\ell$ are allowed then $h$ is piecewise
linear and sufficiency is not fulfilled. If arbitrary real numbers
are allowed then it obvious we get a proper local scoring rule.

\subsection{Statistical mechanics}

Statistical mechanics can be stated based on classical mechanics or
quantum mechanics. For our purpose this makes no difference because
Theorem \ref{thm:Generelt} can be applied for both classical systems
and quantum systems.
\begin{proof}[Proof of Theorem \ref{thm:Generelt}]
 If we restrict to any commutative sub-algebra the divergence is
proportional to information divergence as stated in Theorem \ref{thm:JiaoVenkat}
so that $F$ is proportional to the entropy function $H$ restricted
to the sub-algebra. Any state generates a commutative sub-algebra
so the function $F$ is proportional to $H$ on all states and the
divergence is proportional to information divergence.
\end{proof}
Assume that a heat bath of temperature $T$ is given and that all
the states are close to the state of the heat bath. An action $a\in A$
is some interaction with the thermodynamic system that extracts some
energy from the system. In thermodynamics the quantity $F\left(s\right)=\sup_{a\in A}E\left[a\left(s\right)\right]$
is normally called the free energy. If the temperature is kept fixed
under all interactions $F$ is called \emph{Helmholtz free energy}.
Any sufficient transformation $\Phi$ for $s_{1}$ and $s_{2}$ is
quasi-static and can be approximately realized by a physical process
$\Psi$ that is reversible in the thermodynamic sense of the word.

\begin{align}
D_{F}\left(\Phi\left(s_{1}\right),\Phi\left(s_{2}\right)\right) & =a_{\Phi\left(s_{1}\right)}\left(\Phi\left(s_{1}\right)\right)-a_{\Phi\left(s_{2}\right)}\left(\Phi\left(s_{1}\right)\right).
\end{align}
Now 
\begin{multline}
a_{\Phi\left(s_{2}\right)}\left(\Phi\left(s_{2}\right)\right)=\left(a_{\Phi\left(s_{2}\right)}\circ\Phi\right)\left(s_{2}\right)\\
\leq a_{2}\left(s_{2}\right)=a_{2}\left(\Psi\left(\Phi\left(s_{2}\right)\right)\right)\\
=\left(a_{2}\circ\Psi\right)\left(\Phi\left(s_{2}\right)\right)\leq a_{\Phi\left(s_{2}\right)}\left(\Phi\left(s_{2}\right)\right).
\end{multline}
Hence $a_{\Phi\left(s_{2}\right)}=a_{2}\circ\Psi$ so that
\begin{multline}
D_{F}\left(\Phi\left(s_{1}\right),\Phi\left(s_{2}\right)\right)\\
=\left(a_{1}\circ\Psi\right)\left(\Phi\left(s_{1}\right)\right)-\left(a_{2}\circ\Psi\right)\left(\Phi\left(s_{1}\right)\right)\\
=a_{1}\left(s_{1}\right)-a_{2}\left(s_{1}\right)=D_{F}\left(s_{1},s_{2}\right).
\end{multline}
The amount of extractable energy $Ex$ is proportional to information
divergence. The quotient between extractable energy and information
divergence depends on the temperature and one may even define the
absolute temperature via the formula 
\begin{equation}
Ex=kT\cdot D\left(s_{1}\left\Vert s_{2}\right.\right)\label{eq:Kelvin}
\end{equation}
where $k=1.381\cdot10^{-23}\nicefrac{\unit{J}}{\unit{K}}$ is Boltzmann's
constant. Equation (\ref{eq:Kelvin}) was derived already in \cite{Harremoes1993}
by a similar argument.

According to Equation (\ref{eq:Kelvin}) any bit of information can
be converted into an amount of energy! One may ask how this is related
to the mixing paradox (a special case of Gibbs' paradox). Consider
a container divided by a wall with a blue and a yellow gas on each
side of the wall. The question is how much energy can be extracted
by mixing the gasses?

\begin{center}
\includegraphics{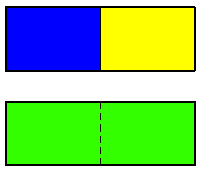}
\par\end{center}

We loose one bit of information about each molecule by mixing the
gasses, but if the color is the \emph{only difference} no energy can
be extracted. This seems to be in conflict with Equation (\ref{eq:Kelvin}),
but in this case different states cannot be converted into each other
by reversible processes. For instance one cannot convert the blue
gas into the yellow gas. To get around this problem one can restrict
the set of preparations and one can restrict the set of measurements.
For instance one may simply ignore measurements of the color of the
gas. What should be taken into account and what should be ignored,
can only be answered by an experienced physicist. Formally this solves
the mixing paradox but from a practical point of view nothing has
been solved. If for instance the molecules in one of the gasses are
much larger than the molecules in the other gas then a semi-permeable
membrane can be used to create an osmotic pressure that can be used
to extract some energy. It is still an open question which differences
in properties of the two gasses that can be used to extract energy.

\subsection{Portfolio theory}

Let $X_{1},X_{2},\dots,X_{k}$ denote \emph{price relatives} for a
list of stocks. For instance $X_{5}=1.04$ means that stock no. 5
increases its value by 4 \%. A \emph{portfolio} is a probability vector
$\vec{b}=\left(b_{1},b_{2},\dots,b_{k}\right)$ where for instance
$b_{5}=0.3$ means that 30 \% of your money is invested in stock no.
5. The total price relative is $X_{1}\cdot b_{1}+X_{2}\cdot b_{2}+\dots+X_{k}\cdot b_{k}=\vec{X}\cdot\vec{b}.$
We now consider a situation where the stocks are traded once every
day. For a sequence of price relative vectors $\vec{X}_{1},\vec{X_{2}},\dots\vec{X}_{n}$
and \emph{a constant re-balancing portfolio} $\vec{b}$ the wealth
after $n$ days is
\begin{equation}
S_{n}=\prod_{i=1}^{n}\left\langle \vec{X}_{i},\vec{b}\right\rangle 
\end{equation}
According to law of large numbers 
\begin{equation}
\frac{1}{n}\log\left(S_{n}\right)\to E\left[\log\left\langle \vec{X},\vec{b}\right\rangle \right]
\end{equation}
Here $E\left[\log\left\langle \vec{X},\vec{b}\right\rangle \right]$
is proportional to the \emph{doubling rate} and is denoted $W\left(\vec{b},P\right)$
where $P$ indicates the probability distribution of $\vec{X}$. Our
goal to maximize $W\left(\vec{b},P\right)$ by choosing an appropriate
portfolio $\vec{b}.$

Let $\vec{b}_{P}$ denote the portfolio that is optimal for $P$.
As proved in \cite{Cover1991}
\begin{equation}
W\left(\vec{b}_{P},P\right)-W\left(\vec{b}_{Q},P\right)\leq D\left(\left.P\right\Vert Q\right).
\end{equation}

\begin{thm}
\label{thm:portfolio}The Bregman divergence 
\begin{equation}
W\left(\vec{b}_{P},P\right)-W\left(\vec{b}_{Q},P\right)
\end{equation}
 satisfies the equation 
\begin{equation}
W\left(\vec{b}_{P},P\right)-W\left(\vec{b}_{Q},P\right)=D\left(\left.P\right\Vert Q\right).
\end{equation}
if and only if the measure $P$ on $k$ distinct vectors of the form
$\left(a_{1},0,0,\dots0\right)$, $\left(0,a_{2},0,\dots0\right),$
until $\left(0,0,\dots a_{k}\right).$ 
\end{thm}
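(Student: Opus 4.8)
The plan is to prove both implications; the forward (\emph{if}) direction is a direct computation, and the converse carries the real content. For the \emph{if} direction, assume $P$ is concentrated on $a_1\vec e_1,\dots,a_k\vec e_k$ with $a_i>0$, and write $P_i$ for the probability of $a_i\vec e_i$. On the $i$-th outcome $\langle\vec X,\vec b\rangle=a_ib_i$, so $W(\vec b,P)=\sum_iP_i\log a_i+\sum_iP_i\log b_i$; the second sum is maximized over the simplex at $\vec b=P$, whence $\vec b_P=P$ (proportional, i.e.\ Kelly, betting). Since $\vec b_Q=Q$ as well, $W(\vec b_P,P)-W(\vec b_Q,P)=\sum_iP_i\log P_i-\sum_iP_i\log Q_i=D(P\|Q)$, which is the asserted equality.

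For the \emph{only if} direction I would first note that $F(P):=W(\vec b_P,P)=\sup_{\vec b}E_P[\log\langle\vec X,\vec b\rangle]$ is convex in $P$ and, as recorded after the Proposition, generates the regret as its Bregman divergence. Because information divergence is the Bregman divergence of the negative entropy $P\mapsto\sum_jP_j\log P_j$, the remark that two convex functions induce the same Bregman divergence exactly when they differ by an affine function turns the hypothesis into $F(P)=\sum_jP_j\log P_j+\langle\lambda,P\rangle$ for some vector $\lambda$, where $\vec x_1,\dots,\vec x_m$ list the possible price relatives and $P_j$ is the weight on $\vec x_j$. I would then apply the envelope theorem to $F(P)=\sum_jP_j\log\langle\vec x_j,\vec b_P\rangle$: along simplex directions $v$ the derivative is $\sum_jv_j\log\langle\vec x_j,\vec b_P\rangle$, which must match $\sum_jv_j(\log P_j+\lambda_j)$, so $\log\langle\vec x_j,\vec b_P\rangle-\log P_j-\lambda_j$ is independent of $j$. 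Inserting this back into $F(P)=\sum_jP_j\log\langle\vec x_j,\vec b_P\rangle$ and comparing with the ansatz kills the remaining $P$-dependent term, leaving the clean relation $\langle\vec x_j,\vec b_P\rangle=\gamma_jP_j$ with constants $\gamma_j=e^{\lambda_j}>0$.

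The decisive step is to convert this relation into support information. Under the standing uniqueness of optimal actions (which is exactly what makes the regret a Bregman divergence) $\vec b_P$ is continuous, so the relation persists at the vertices $P=\vec e_l$ and gives $\langle\vec x_j,\vec b_{\vec e_l}\rangle=0$ for $j\ne l$. Moreover $\vec b_{\vec e_l}$ maximizes the linear functional $\langle\vec x_l,\cdot\rangle$ over the simplex, hence is the unique vertex $\vec e_{i(l)}$ with $i(l)=\arg\max_ix_{l,i}$; the orthogonality then reads $x_{j,i(l)}=0$ for all $j\ne l$, i.e.\ stock $i(l)$ pays off only on outcome $l$. If $i(l)=i(l')$ for $l\ne l'$ this would force that stock to vanish on every outcome, contradicting $x_{l,i(l)}>0$, so $l\mapsto i(l)$ is injective and $m\le k$. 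Since $\vec b_P=\sum_lP_l\vec e_{i(l)}$ invests only in the stocks $i(1),\dots,i(m)$, the remaining stocks are never used and may be discarded; relabelling so that $i(l)=l$ yields $k=m$, $\vec b_P=P$, and $\vec x_j=a_j\vec e_j$ with $a_j=x_{j,j}>0$, which is precisely the claimed form.

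I expect the main obstacles to be the two analytic points glossed above: justifying the envelope computation together with the constancy of the scalar factor on the closed simplex, and establishing continuity of $\vec b_P$ up to the vertices so that the orthogonality relation is legitimate there. The combinatorial reduction from a general market to the square, non-degenerate case — discarding idle stocks and dominated outcomes — is routine once uniqueness of the optimal portfolio is in force, but it must be handled carefully so that the final support count matches the exact number $k$ appearing in the conclusion.
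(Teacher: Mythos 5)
Your converse runs in a genuinely different direction from the paper's: where the paper extracts the equality condition in Jensen's inequality and then grinds through the Kuhn--Tucker conditions of Cover and Thomas (building an equivalence relation $\sim$ on outcomes and killing non-orthogonality via a three-point-mixture contradiction), you identify $F(P)=W(\vec b_P,P)$ as the generating convex function, invoke the fact that two convex functions generate the same Bregman divergence exactly when they differ by an affine function, and then use an envelope argument to reach $\langle\vec x_j,\vec b_P\rangle=\gamma_jP_j$. That relation is precisely the paper's Equation (\ref{eq:EnTilEn}) in disguise, and from there your vertex analysis (each outcome acquires a dedicated stock, the assignment is injective, idle stocks are discarded) is shorter and arguably more transparent than the paper's combinatorial treatment. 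The points you flag yourself---continuity of $\vec b_P$ up to the vertices, and the bookkeeping needed so that the final count equals $k$---are glossed over to a comparable degree in the paper's own proof, so they are not what separates the two arguments.

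There is, however, a genuine gap at the very start of your converse: you assume from the outset that the price relatives take only finitely many values $\vec x_1,\dots,\vec x_m$, i.e.\ that $P$ has finite support. The theorem allows an arbitrary measure $P$, and a substantial portion of the paper's proof is devoted precisely to establishing finiteness of the support: from the Jensen equality condition one gets that $\mathrm{d}Q/\mathrm{d}P$ must be proportional to $\langle\vec X,\vec b_Q\rangle/\langle\vec X,\vec b_P\rangle$ for every $Q$ on the support of $\vec X$, so the set of all probability measures on that support is parametrized by the $(k-1)$-dimensional simplex of portfolios; since the measures on an infinite support form an infinite-dimensional family, the support can contain at most $k$ points. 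Your Bregman-plus-envelope machinery has nothing to grip on before this reduction is made: the affine-difference characterization and the differentiation along simplex directions are finite-dimensional arguments, and applying them presupposes exactly the finite-support conclusion that has to be proved. A second, smaller point: your first step needs the equality $W(\vec b_P,P)-W(\vec b_Q,P)=D(P\Vert Q)$ to hold for all pairs $(P,Q)$ on the common support, not merely for all $Q$ with one $P$ fixed, in order to conclude that $F$ differs from negative entropy by an affine function; this reading is consistent with the paper's, but it should be stated explicitly since the theorem's wording fixes $P$.
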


\section{CONCLUSION}

On the level of optimization the theory works out in exactly the same
way in statistics, information theory, statistical mechanics, and
portfolio theory. The sufficiency condition is more complicated to
apply. It requires that we restrict to a certain class of mappings
of the state space into itself. In the case where the state space
can be identified with a set of density matrices one should restrict
to completely positive maps. In case the state space has a different
structure it is not obvious which mappings one should restrict to.
The basic problem is that we have to introduce a notion of tensor
product for convex sets and it is not obvious how to do this, but
this will be the topic of further investigations and results on this
topic may have some impact on our general understanding of quantum
theory.

The original paper of Kullback and Leibler \cite{Kullback1951} was
called ``On Information and Sufficiency''. In the present paper
we have made the relation between information divergence and the notion
of sufficiency more explicit. The idea of sufficiency has different
consequences in different applications but in all cases information
divergence prove to be the quantity that convert the general notion
of sufficiency into a number. For specific applications one cannot
identify the sufficient variables without studying the specific application
in detail. For problems like the the mixing paradox there is still
no simple answer to the question about what the sufficient variables
are, but if the sufficient variables have been specified we have the
mathematical framework to develop the rest of the theory in a consistent
manner.


\newpage{}

\section*{Appendix}

\subsection*{Proof of Theorem \ref{thm:Smooth}}

If $f$ is given in terms of a regret function $D_{F}$ then
\begin{multline}
\sum_{x\in\mathcal{X}}P\left(x\right)\cdot f\left(x,Q\right)\\
=\sum_{x\in\mathcal{X}}P\left(x\right)\cdot\left(D_{F}\left(\delta_{x},Q\right)+g\left(x\right)\right)\\
\geq\sum_{x\in\mathcal{X}}P\left(x\right)\cdot D_{F}\left(\delta_{x},P\right)+D_{F}\left(P,Q\right)\\
\,\,+\sum_{x\in\mathcal{X}}P\left(x\right)\cdot g\left(x\right)
\end{multline}
because $P=\sum_{x\in\mathcal{X}}P\left(x\right)\cdot\delta_{x}$.
If $F$ is smooth then $D_{F}\left(P,Q\right)=0$ if and only if $Q=P.$

Assume that $f$ is proper. Then we may define a divergence by
\begin{equation}
D\left(P,Q\right)=\sum_{x\in\mathcal{X}}P\left(x\right)\cdot f\left(x,Q\right)-\sum_{x\in\mathcal{X}}P\left(x\right)\cdot f\left(x,P\right).
\end{equation}
Since $f$ is assumed to be proper $D\left(P,Q\right)\geq0$ with
equality if and only if $P=Q.$ The equality $\sum t_{i}\cdot D_{F}\left(P_{i},Q\right)=\sum t_{i}\cdot D_{F}\left(P_{i},\hat{P}\right)+D_{F}\left(\hat{P},Q\right)$
follows by straight forward calculations. With these two results we
see that $D$ equals a Bregman divergence $D_{F}$ and that 
\begin{multline}
D_{F}\left(\delta_{y},Q\right)\\
=\sum_{x\in\mathcal{X}}\delta_{y}\left(x\right)\cdot f\left(x,Q\right)-\sum_{x\in\mathcal{X}}\delta_{y}\left(x\right)\cdot f\left(x,\delta_{y}\right)\\
=f\left(y,Q\right)-f\left(y,\delta_{y}\right).
\end{multline}
Hence $f\left(x,Q\right)=D_{F}\left(\delta_{x},Q\right)+f\left(x,\delta_{x}\right).$

\subsection*{Proof of Lemma \ref{lem:SuffImplLocal}}

Let $D_{F}$ denote a regret function that satisfies the sufficiency
condition. Then
\begin{multline}
D_{F}\left(\delta_{i},\left(q_{1},q_{2},\dots,q_{\ell}\right)\right)\\
=D_{F}\left(\delta_{1},\left(q_{i},q_{i+1},\dots,q_{i-2},q_{i-1}\right)\right)
\end{multline}
where we have made a cyclic permutation of indices. Next we use the
sufficient transformation that projects a mixture of $\delta_{1}$
and a uniform distribution. 
\begin{multline}
D_{F}\left(\delta_{i},\left(q_{1},q_{2},\dots,q_{\ell}\right)\right)\\
=D_{F}\left(\delta_{1},\left(q_{i},\frac{\sum_{j\neq i}q_{j}}{\ell-1},\frac{\sum_{j\neq i}q_{j}}{\ell-1},\dots,\frac{\sum_{j\neq i}q_{j}}{\ell-1}\right)\right)\\
=D_{F}\left(\delta_{1},\left(q_{i},\frac{1-q_{i}}{\ell-1},\frac{1-q_{i}}{\ell-1},\dots,\frac{1-q_{i}}{\ell-1}\right)\right).
\end{multline}
Note that the projection can be obtained by taking a mixture of all
permutations of the extreme points that leave the first extreme point
unchanged. Hence the scoring rule is given by the local scoring rule
\[
g\left(p\right)=D_{F}\left(\delta_{1},\left(q,\frac{1-q}{\ell-1},\frac{1-q}{\ell-1},\dots,\frac{1-q}{\ell-1}\right)\right).
\]

\subsection*{Proof of Theorem \ref{thm:portfolio}}

We have 
\begin{multline}
W\left(\vec{b}_{P},P\right)-W\left(\vec{b}_{Q},P\right)\\
=\int\log\left\langle \vec{X},\vec{b_{P}}\right\rangle \,\mathrm{d}PX-\int\log\left\langle \vec{X},\vec{b_{Q}}\right\rangle \,\mathrm{d}PX\\
=\int\log\left(\frac{\left\langle \vec{X},\vec{b_{P}}\right\rangle }{\left\langle \vec{X},\vec{b_{Q}}\right\rangle }\right)\,\mathrm{d}PX\\
=\int\log\left(\frac{\left\langle \vec{X},\vec{b_{P}}\right\rangle }{\left\langle \vec{X},\vec{b_{Q}}\right\rangle }\cdot\frac{\mathrm{d}Q}{\mathrm{d}P}\cdot\frac{\mathrm{d}P}{\mathrm{d}Q}\right)\,\mathrm{d}PX\\
=\int\log\left(\frac{\left\langle \vec{X},\vec{b_{P}}\right\rangle }{\left\langle \vec{X},\vec{b_{Q}}\right\rangle }\cdot\frac{\mathrm{d}Q}{\mathrm{d}P}\right)\,\mathrm{d}PX+D\left(P\Vert Q\right).
\end{multline}
Next we use Jensen's inequality to get
\begin{multline}
W\left(\vec{b}_{P},P\right)-W\left(\vec{b}_{Q},P\right)\\
\leq\log\left(\int\frac{\left\langle \vec{X},\vec{b_{P}}\right\rangle }{\left\langle \vec{X},\vec{b_{Q}}\right\rangle }\cdot\frac{\mathrm{d}Q}{\mathrm{d}P}\,\mathrm{d}PX\right)+D\left(P\Vert Q\right)\\
=\log\left(\int\frac{\left\langle \vec{X},\vec{b_{P}}\right\rangle }{\left\langle \vec{X},\vec{b_{Q}}\right\rangle }\,\mathrm{d}QX\right)+D\left(P\Vert Q\right)\\
\leq\log\left(1\right)+D\left(P\Vert Q\right)=D\left(P\Vert Q\right).
\end{multline}
Jensen's inequality holds with equality if and only if 
\begin{equation}
\frac{\left\langle \vec{X},\vec{b_{P}}\right\rangle }{\left\langle \vec{X},\vec{b_{Q}}\right\rangle }\cdot\frac{\mathrm{d}Q}{\mathrm{d}P}
\end{equation}
is constant $P$-almost surely. Equivalently $\frac{dQ}{dP}$ is proportional
to $\frac{\left\langle \vec{X},\vec{b_{Q}}\right\rangle }{\left\langle \vec{X},\vec{b_{P}}\right\rangle }$
for any probability measure $Q$ on the support of $\vec{X}.$ The
set of vectors $\vec{b_{Q}}$ lie in a $k-1$ dimensional convex set.
Therefore the set of probability measures on the support of $P$ is
at most $k-1$ dimensional. Hence $P$ is supported on at most $k$
vectors in $\mathbb{\mathbf{\mathbb{R}}}_{0,+}^{k}$.

The inequality 
\begin{equation}
\int\frac{\left\langle \vec{X},\vec{b_{P}}\right\rangle }{\left\langle \vec{X},\vec{b_{Q}}\right\rangle }\,\mathrm{d}QX\leq1
\end{equation}
 holds with equality if $\left(\vec{b_{Q}}\right)_{i}=0$ implies
$\left(\vec{b_{P}}\right)_{i}=0.$ If $\frac{\left\langle \vec{X},\vec{b_{P}}\right\rangle }{\left\langle \vec{X},\vec{b_{Q}}\right\rangle }=k\cdot\frac{\mathrm{d}P}{\mathrm{d}Q}$
we have 
\begin{eqnarray}
\int\frac{\left\langle \vec{X},\vec{b_{P}}\right\rangle }{\left\langle \vec{X},\vec{b_{Q}}\right\rangle }\,\mathrm{d}QX & = & \int k\cdot\frac{\mathrm{d}P}{\mathrm{d}Q}\,\mathrm{d}QX=k
\end{eqnarray}
so $\frac{\left\langle \vec{X},\vec{b_{P}}\right\rangle }{\left\langle \vec{X},\vec{b_{Q}}\right\rangle }\leq\frac{\mathrm{d}P}{\mathrm{d}Q}$.
The reversed inequality is proved in the same way so we get 
\begin{equation}
\frac{\mathrm{d}P}{\mathrm{d}Q}=\frac{\left\langle \vec{X},\vec{b_{P}}\right\rangle }{\left\langle \vec{X},\vec{b_{Q}}\right\rangle }.\label{eq:EnTilEn}
\end{equation}
Equation (\ref{eq:EnTilEn}) gives an affine bijection between distributions
$P$ and portfolios $\vec{b_{P}}$. The set of portfolios is a simplex
with $k$ extreme points so the set of distributions must also be
a simplex with $k$ extreme points. Therefore support of the probability
measures is a set of $k$ vectors. We denote the vector of price relatives
corresponding to $\vec{b_{\delta_{j}}}$ by $\vec{x_{j}}$ and the
$i$'th coordinate of this vector by $\left(\vec{x_{j}}\right)_{i}$. 

The portfolio $\vec{b}=\left(b_{1},b_{2},\dots,b_{k}\right)$ is optimal
for the probability distribution with weight $b_{j}$ on the vector
$\vec{x_{j}}.$ According to the Kuhn-Tucker conditions \cite[Thm. 15.2.1]{Cover1991}
the vector $\vec{b}=\left(b_{1},b_{2},\dots,b_{k}\right)$ is optimal
for the probability distribution with weight $b_{j}$ on the vector
$\vec{x_{j}}$ if 
\begin{equation}
\sum_{j=1}^{k}\frac{\left(\vec{x_{j}}\right)_{i}}{\left\langle \vec{b},\vec{x_{j}}\right\rangle }\cdot b_{j}\leq1
\end{equation}
 with equality for all $i$ for which $b_{i}>0.$ Assume that $b_{j}=\delta_{\ell}\left(j\right).$
Then we get the inequality 
\begin{equation}
\frac{\left(\vec{x_{\ell}}\right)_{i}}{\left(\vec{x_{\ell}}\right)_{\ell}}\leq1
\end{equation}
or, equivalently, $\left(\vec{x_{\ell}}\right)_{i}\leq\left(\vec{x_{\ell}}\right)_{\ell}$. 

If $b_{\ell}=s>0$ and $b_{m}=t>0$ where $s+t=1$ then 
\begin{eqnarray}
1 & = & \frac{\left(\vec{x_{\ell}}\right)_{\ell}}{\left\langle \vec{b},\vec{x_{\ell}}\right\rangle }\cdot b_{\ell}+\frac{\left(\vec{x_{m}}\right)_{\ell}}{\left\langle \vec{b},\vec{x_{m}}\right\rangle }\cdot b_{m}\\
 & = & \frac{\left(\vec{x_{\ell}}\right)_{\ell}\cdot b_{\ell}}{b_{\ell}\cdot\left(\vec{x_{\ell}}\right)_{\ell}+b_{m}\cdot\left(\vec{x_{\ell}}\right)_{m}}\\
 &  & +\frac{\left(\vec{x_{m}}\right)_{\ell}\cdot b_{m}}{b_{\ell}\cdot\left(\vec{x_{m}}\right)_{\ell}+b_{m}\cdot\left(\vec{x_{m}}\right)_{m}}.
\end{eqnarray}
Hence 
\begin{equation}
\frac{b_{m}\cdot\left(\vec{x_{\ell}}\right)_{m}}{b_{\ell}\cdot\left(\vec{x_{\ell}}\right)_{\ell}+b_{m}\cdot\left(\vec{x_{\ell}}\right)_{m}}=\frac{\left(\vec{x_{m}}\right)_{\ell}\cdot b_{m}}{b_{\ell}\cdot\left(\vec{x_{m}}\right)_{\ell}+b_{m}\cdot\left(\vec{x_{m}}\right)_{m}}
\end{equation}
and
\begin{equation}
\frac{\left(\vec{x_{\ell}}\right)_{m}}{b_{\ell}\cdot\left(\vec{x_{\ell}}\right)_{\ell}+b_{m}\cdot\left(\vec{x_{\ell}}\right)_{m}}=\frac{\left(\vec{x_{m}}\right)_{\ell}}{b_{\ell}\cdot\left(\vec{x_{m}}\right)_{\ell}+b_{m}\cdot\left(\vec{x_{m}}\right)_{m}}
\end{equation}
which is equivalent to 
\begin{multline}
\left(\vec{x_{\ell}}\right)_{m}\left(b_{\ell}\cdot\left(\vec{x_{m}}\right)_{\ell}+b_{m}\cdot\left(\vec{x_{m}}\right)_{m}\right)\\
=\left(\vec{x_{m}}\right)_{\ell}\left(b_{\ell}\cdot\left(\vec{x_{\ell}}\right)_{\ell}+b_{m}\cdot\left(\vec{x_{\ell}}\right)_{m}\right).
\end{multline}
This should hold for all positive $b_{\ell},b_{m}$ for which $b_{\ell}+b_{m}=1$
so it also holds for the limiting value $b_{m}=0$ where the equality
reduces to 
\begin{equation}
\left(\vec{x_{\ell}}\right)_{m}\left(\vec{x_{m}}\right)_{\ell}=\left(\vec{x_{m}}\right)_{\ell}\left(\vec{x_{\ell}}\right)_{\ell}
\end{equation}
 so that either $\left(\vec{x_{m}}\right)_{\ell}=0$ or $\left(\vec{x_{\ell}}\right)_{m}=\left(\vec{x_{\ell}}\right)_{\ell}$.
Similarly we get $\left(\vec{x_{\ell}}\right)_{m}=0$ or $\left(\vec{x_{m}}\right)_{\ell}=\left(\vec{x_{m}}\right)_{m}$.
Together we get either $\left(\vec{x_{m}}\right)_{\ell}=0$ and $\left(\vec{x_{\ell}}\right)_{m}=0$,
or $\left(\vec{x_{\ell}}\right)_{m}=\left(\vec{x_{\ell}}\right)_{\ell}$
and $\left(\vec{x_{m}}\right)_{\ell}=\left(\vec{x_{m}}\right)_{m}.$
Therefore $\left(\vec{x_{m}}\right)_{\ell}=0$ or $\left(\vec{x_{m}}\right)_{\ell}=\left(\vec{x_{m}}\right)_{m}$.

Let $\sim$ denote the relation on $\left\{ 1,2,3,\dots,k\right\} $
defined by $\ell\sim m$ when $\left(\vec{x_{m}}\right)_{\ell}=\left(\vec{x_{m}}\right)_{m}$.
The relation $\sim$ is obviously reflexive, and as we have seen it
is symmetric. We will prove that $\sim$ is transitive. Assume that
$\ell\sim m$ and $m\sim n$. Then $\left(\vec{x_{\ell}}\right)_{\ell}=\left(\vec{x_{\ell}}\right)_{m}$
and $\left(\vec{x_{m}}\right)_{\ell}=\left(\vec{x_{m}}\right)_{m}$=$\left(\vec{x_{m}}\right)_{n}$
and $\left(\vec{x_{n}}\right)_{m}=\left(\vec{x_{n}}\right)_{n}.$
Assume further that $\ell\nsim n$ so that $\left(\vec{x_{\ell}}\right)_{n}=\left(\vec{x_{n}}\right)_{\ell}=0.$
Assume that $b_{\ell}=s>0,$ $b_{m}=t>0,$and $b_{n}=u>0,$ and $s+t+u=1$
\begin{eqnarray}
1 & = & \frac{\left(\vec{x_{\ell}}\right)_{n}}{s\left(\vec{x_{\ell}}\right)_{\ell}+t\left(\vec{x_{\ell}}\right)_{m}+u\left(\vec{x_{\ell}}\right)_{n}}\cdot s\\
 &  & +\frac{\left(\vec{x_{m}}\right)_{n}}{s\left(\vec{x_{m}}\right)_{\ell}+t\left(\vec{x_{m}}\right)_{m}+u\left(\vec{x_{m}}\right)_{n}}\cdot t\\
 &  & +\frac{\left(\vec{x_{n}}\right)_{n}}{s\left(\vec{x_{n}}\right)_{\ell}+t\left(\vec{x_{n}}\right)_{m}+u\left(\vec{x_{n}}\right)_{n}}\cdot u\\
1 & = & \frac{0}{s\left(\vec{x_{\ell}}\right)_{\ell}+t\left(\vec{x_{\ell}}\right)_{\ell}}\cdot s\\
 &  & +\frac{\left(\vec{x_{m}}\right)_{m}}{s\left(\vec{x_{m}}\right)_{m}+t\left(\vec{x_{m}}\right)_{m}+u\left(\vec{x_{m}}\right)_{m}}\cdot t\\
 &  & +\frac{\left(\vec{x_{n}}\right)_{n}}{t\left(\vec{x_{n}}\right)_{n}+u\left(\vec{x_{n}}\right)_{n}}\cdot u\\
1 & = & t+\frac{u}{t+u}.
\end{eqnarray}
This should hold for all $s,t,u$ which is a contradiction. Therefore
$\ell\sim n$ and we conclude that $\sim$ is transitive. 

Since $\sim$ is transitive either $\vec{x_{\ell}}$ and $\vec{x_{m}}$
are orthogonal or they are parallel with price relatives that are
either zero or have the same price relatives that are the same for
stock $\ell$ and stock $m$. Therefore we may consider stock $\ell$
and stock $m$ as the same stock. Hence we may exclude the case where
vectors are parallel, so all the vectors are orthogonal but this is
only possible if the vectors are proportional to the basis vectors.
\end{document}